\newcommand{\by}{{\bm y}}
\newcommand{\bz}{{\bm z}}
\newcommand{\bs}{{\bm s}}
\newcommand{\bt}{{\bm t}}
\newcommand{\bPhi}{{\bm \Phi}}
\newcommand{\bu}{{\bm u}}
\newcommand{\bx}{{\bm x}}
\newcommand{\bw}{{\bm w}}
\newcommand{\ubu}{\underline{\bm u}}
\newcommand{\ug}{\underline{\gamma}}
\newcommand{\argmin}{\mathop{\rm argmin}}
\newcommand{\R}{\field{R}}
\newcommand{\N}{\field{N}}
\newcommand{\mrm}[1]{\mathrm{#1}}
\newcommand{\argmax}{\mathop{\mathrm{argmax}}}
\newcommand{\field}[1]{\mathbb{#1}}
\newcommand{\cZ}{\mathcal{Z}}
\newcommand{\cE}{\mathcal{E}}
\newcommand{\cU}{{\mathcal U}}
\newcommand{\cP}{{\mathcal P}}
\newcommand{\cQ}{{\mathcal Q}}
\newcommand{\cX}{{\mathcal X}}
\newtheorem{theorem}{Theorem}
\begin{document}

\baselineskip 16pt

\title{Lazy global feedbacks\\ for quantized nonlinear event systems}

\author{Stefan Jerg, Oliver Junge%
\thanks{Faculty for Mathematics, Technische Universit\"at M\"unchen, 85748~Garching, Germany, \textup{\tt  jerg@ma.tum.de, oj@tum.de}}%
\thanks{This work was supported in part by the priority program SPP 1305 of the German research foundation (DFG) and by the Bavarian State Ministry of Sciences, Research and Arts through the PhD program TopMath}}

\date{March 2012}
\maketitle 

\begin{abstract}                          
We consider nonlinear event systems with quantized state information and design a globally stabilizing controller from which only the minimal required number of control value changes along the feedback trajectory to a given initial condition is transmitted to the plant. In addition, we present a non-optimal heuristic approach which might reduce the number of control value changes and requires a lower computational effort. The constructions are illustrated by two numerical examples.
\end{abstract}

\section{Introduction}

Traditionally, controllers for (nonlinear) systems have been designed using a continuum as the underlying time domain.  With the rise of digital information processing, \emph{time-triggered} or \emph{sampled-data} controller designs have become popular. There, a regular grid of time instances serves as the time domain, cf.\ \cite{AsWi97}. Both schemes close the control loop independently of the system's behavior.  This might lead to unnecessary communication between the controller and the plant.  In the case that the communication is implemented via a digital network, restrictions like the maximal bandwith or load dependent stochastic effects might play a role and influence the behavior of the closed loop system.  In order to decrease the network load and possibly avoid these effects, in \emph{event based} control, information is only transmitted  when necessary in order to ensure stability of the closed loop system, see, e.g.,  \cite{Ar99,OtMoTi02a,AsBe02,TaXi06a,KoBr06,VaKa06a,HeSaVa07,As08,GrJu08a,LuLe09a}. 

Another means of reducing the amount of data which has to be transmitted (and thus further reducing the network load) is to use a \emph{quantization} of an underlying continuous state space.  While any real number which is transmitted digitally, necessarily comes from a quantized set since only finitely many digits can be transmitted, here one aims for quantizations which are as coarse as possible since then fewer bits will suffice to encode the data.  We refer to, e.g., \cite{Hs92b,Lu94a,FoJuLu02a,Sch03a}.

Recently, a new approach for the construction of controllers for quantized systems has been proposed which relies on a set oriented approach in combination with graph theoretic techniques, cf.\ \cite{JuOs04a,GrJu07a,GrJu08a}.  In \cite{GrMu09a}, this approach has been extended to event systems.

In the present contribution we describe how to extend this approach such that the number of times that data has to be transmitted from the controller to the plant is minimized along the feedback trajectory to some initial condition.  The construction is based on the optimality principle with a suitably chosen state space and cost function.
Additionally, we present a non-optimal heuristic approach which also reduces the number of data transmission events while requiring a significantly lower computational effort.  The two constructions are illustrated by two numerical examples, an inverted pendulum and a thermofluid batch process.

The paper is structured as follows: In Section~\ref{sec:event} we briefly summarize the basic construction from \cite{GrJu08a,GrMu09a}.  In Section~\ref{sec:mincc} we describe how to extend this construction such that the number of data transmissions from the controller to the plant is minimized and illustrate this construction by two numerical examples in Section~\ref{sec:NumExpMinCC}.  Finally, in Section~\ref{sec:heuristic}, we propose the heuristic scheme for reducing the communication effort.  Again, this is illustrated by the two examples from Section~\ref{sec:NumExpMinCC}.

\section{Global optimal feedbacks for quantized nonlinear event systems}\label{sec:event}

In this section, we summarize the constructions from \cite{GrJu08a,GrMu09a}: We are given a plant which is modeled by a nonlinear discrete time control system (which may, e.g., be derived from a continuous time system by time sampling)
\begin{equation}\label{eq:plant}
\bx(k+1) = f(\bx(k),\bu(k)), \quad k=0,1,2,\ldots,
\end{equation}
where $f:\cX\times \cU\to \mathcal{X}$ is continuous, $\bx(k)\in \cX$ is the state and $\bu(k)\in \cU$ is the control input, $\mathcal{X}\subset\R^n$ and $\cU\subset\R^m$ compact. 
In addition to $f$, we are given a continuous running cost function $c:\mathcal{X}\times \cU\to [0,\infty)$ as well as a
a \emph{target set} $\mathcal{X}^*\subset \mathcal{X}$. We assume $c$ to satisfy $c(\bx,\bu)=0$ iff $\bx\in \mathcal{X}^*$. Our goal is to compute a feedback law for this system which drives the system into the target set $\mathcal{X}^*$ while minimizing the accumulated cost.  However, the information which is transmitted from the plant to the controller is restricted in the following two ways:
\begin{enumerate}
\item \emph{Event model:} The controller only receives information on the state whenever an \emph{event} occurs. Formally, based on the discrete time model (\ref{eq:plant}) of the plant, we are dealing with the discrete time system
\begin{equation}\label{eq:event system}
\bx(\ell+1) = \tilde f(\bx(\ell),\bu(\ell)), \quad \ell=0,1,\ldots,
\end{equation}
where
\begin{equation}
\tilde f(\bx,\bu) = f^{r(\bx,\bu)}(\bx,\bu),
\end{equation}
$r:\mathcal{X}\times \cU\to \N_0$ is a given \emph{event function} and the iterate $f^r$ is defined by $f^0(\bx,\bu)=\bx$ and $f^r(\bx,\bu)=f(f^{r-1}(\bx,\bu),\bu)$, cf.\ \citep{GrMu09a}.  Accordingly, we define an associated running cost $\tilde c:\cX\times\cU\to [0,\infty)$ by
\begin{equation}\label{eq:running cost}
\tilde c(\bx,\bu) = \sum_{k=0}^{r(\bx,\bu)-1} c(f^k(\bx,\bu),\bu).
\end{equation}
Note that we can reconstruct the ``true time'' $k$ from the ``event time'' $\ell$ by the event function $r$: we have that
\[
k(\ell+1)=k(\ell)+r(\bx(\ell),\bu(\ell)).
\]
\item \emph{Quantization model:} The controller only receives \emph{quantized} information on the state.  Formally, we are given a (finite) partition $P=\{\cP_1,\ldots,\cP_d\}$, $\cP_i\subset \mathcal{X}$, of $\mathcal{X}$ which induces an equivalence relation $\sim$ on $\cX\times\cX$ by $x\sim y$ $\Leftrightarrow$ $x$ and $y$ lie in the same partition element.  We denote by $[\bx]\in P$ the corresponding equivalence class of $\bx\in\cX$. Only $[\bx(\ell)]$ is transmitted from the plant to the controller at the event time $\ell$.  Thus, from the viewpoint of the controller, the plant is given by the finite state system, cf.\ \citep{GrJu08a,GrMu09a}
\begin{equation}\label{eq:finite state system}
\cP(\ell+1) = F(\cP(\ell),\bu(\ell),\gamma(\ell)), \quad \ell=0,1,\ldots,
\end{equation}
defined by
\[
F(\cP,\bu,\gamma)=[\tilde f(\gamma(\cP),\bu)],\qquad \cP\in P,\bu\in\cU,
\]
where $\gamma:P\to \mathcal{X}$ denotes a \emph{choice function} which  satisfies $[\gamma(\cP)]=\cP$ for all $\cP\in P$. The choice function models the fact that it is unknown to the controller from which exact state $\bx(\ell)$ the system transits to the next cell $\cP(\ell+1)$. We let $\Gamma$ denote the set of those functions.  Thus, in each step, the dynamics is influenced by the two control parameters $\bu(\ell)$ and $\gamma(\ell)$.  Our goal is to choose $\bu(\ell)$ in each step such that the state $\cP(\ell)$ is controlled into the target set.  At the same time, the influence of the choice function may be viewed as a perturbation which might prevent us from reaching $\cX^*$.  In this sense,  (\ref{eq:finite state system}) constitutes a \emph{dynamic game}, cf.\ \cite{GrJu08a}.  
\end{enumerate}

\subsection{Computing the optimal feedback}

In order to be compatible with our quantization, from now on we assume that $\mathcal{X}^*$ is given by the union of some elements from $P$.  
For the quantized system (\ref{eq:finite state system}) we define
\[
C(\cP,\bu) := \sup_{\bx\in \cP} \tilde c(\bx,\bu).
\]
For given $\cP(0)\in P, \ubu=(\bu(\ell))_\ell\in \cU^\N$ and $\ug=(\gamma(\ell))_\ell\in\Gamma^\N$, the cost accumulated along the associated trajectory $(\cP(\ell))_\ell\in P^\N$ of (\ref{eq:finite state system}) (which depends on $\ubu$ and $\ug$) is
\[
J(\cP(0),\ubu,\ug) := \sum_{\ell=0}^L C(\cP(\ell),\bu(\ell))\in [0,\infty],
\]
where $L=\min\{\ell\geq 0 : \cP(\ell)\subset \cX^*\}\in\N\cup\{\infty\}$.
The \emph{(upper) optimal value function} is 
\[
V(\cP) := \sup_{\Gamma}\inf_{\ubu\in \cU^\N} J(\cP,\ubu,\Gamma(\ubu)) \in [0,\infty],
\]
where $\Gamma:\cU^\N\to\Gamma^\N$ is a strategy of the form
\begin{align*}
\Gamma(\ubu) & = \Gamma((\bu(\ell))_\ell) \\
& = (\gamma_1(\bu(1)),\gamma_2(\bu(1),\bu(2)),\gamma_3(\bu(1),\bu(2),\bu(3)),\ldots)
\end{align*}
and the sup in the definition of the optimal value function is over all strategies of this form.
The optimal value function -- by standard arguments, cf.\ \citep{Be95a} -- for $\cP$ in the stabilizable set $S=\{\cP\in P\mid V(\cP)<\infty\}$ is the unique solution to the \emph{optimality principle}
\begin{equation}\label{eq:value function}
V(\cP) = \inf_{\bu\in \cU}\left\{C(\cP,\bu)+\sup_{\gamma\in\Gamma}V(F(\cP,\bu,\gamma))\right\}
\end{equation}
together with the boundary condition $V(\cP)=0$ for $\cP\subset\cX^*$. Given $V$, we obtain  an optimal feedback for (\ref{eq:finite state system}) by setting
\begin{equation}\label{eq:feedbacklaw}
u(\cP) := \argmin_{\bu\in \cU}\left\{C(\cP,\bu)+\sup_{\gamma\in\Gamma} V(F(\cP,\bu,\gamma))\right\}
\end{equation}
for $\cP\in S$.  Note that we also immediately obtain a feedback for the original system (\ref{eq:plant}) resp.\ (\ref{eq:event system}) from this by setting $u(\bx):=u([\bx])$ for $\bx \in \bigcup_{\cP\in S} \cP$.  By construction, any trajectory of the closed loop system
\begin{equation}\label{eq:closed loop}
\bx(k+1)=f(\bx(k),u([\bx(k)]))
\end{equation}
with $x(0)\in S$ is eventually reaching the target set $\cX^*$, cf.\ \cite{GrJu07a,GrMu09a}.

Computationally, the feedback is constructed using a directed, weighted hypergraph together with a corresponding shortest path algorithm, cf.\ \cite{JuOs04a,GrJu08a,Lo07a} (see also Section~\ref{sec:heuristic}).

\section{Construction of a lazy feedback}
\label{sec:mincc}

When the data transmission between the plant and the controller is realized via a digital network it is often desirable to minimize the amount of transmitted information in order to reduce the overall network load.  More specifically, here we treat the question of how to minimize the number of times that a new control value has to be transmitted from the controller to the plant.  Using an optimization based feedback construction, this goal can directly be modelled by suitably defining the running cost function.

In order to detect a change in the control value generated by the controller we need to be able to compare to its value from the previous time step (resp. event).  We therefore define  the \emph{extended state space} $\cZ := \cX \times \cU$. Based on the event system \eqref{eq:event system}, we consider the event system 
\begin{equation}\label{eq:mincc system}
\bz(\ell+1) = g(\bz(\ell),\bu(\ell)), \quad \ell=0,1,2,\ldots
\end{equation}
with $\bz(\ell)=(\bx(\ell),\bw(\ell))\in\cZ$ the extended state vector, $\bu(\ell) \in \cU$ and $g:\cZ\times\cU\to\cZ$ is defined by
\[
g(\bz,\bu)=g((\bx,\bw),\bu)=\left[\begin{array}{c} \tilde f(\bx,\bu) \\
\bu\\
\end{array}\right].
\]
We define $\cZ^*:=\cX^*\times \cU$ as the target set in the extended state space so that reaching the target only depends on the state $\bx$. We further define an associated running cost function $d: \cZ \times \cU \rightarrow [0,\infty)$ by
\begin{align}\label{eq:mincc costs}
d((\bx,\bw),\bu) & =  (1-\lambda) \tilde c(\bx, \bu) + \lambda (1-\delta(\bu-\bw))
\end{align}
with 
\begin{equation}
\label{eq:mincc varphi}
\delta(\bu) := \left\{\begin{array}{ll} 1, & \text{if } \bu = 0,\\
0, & \text{else.} \end{array}\right.
\end{equation}
Here, $\lambda \in [0,1)$ must be strictly $<1$ in order to guarantee that $d(\bz,\bu) = 0$ iff $\bz \in \cZ^*$. Note that the dynamics of the extended system (\ref{eq:mincc system}) does not depend on the second component $\bw$ of the extended state vector $\bz$, but  only the modified cost function $d$ does.

We can now apply the construction from the previous section to the system~(\ref{eq:mincc system}) with cost function~(\ref{eq:mincc costs}).  To this end, we would need to construct a partition of $\cU$.  Instead, in order to simplify the exposition, here we simply assume that $\cU$ is discrete, i.e.\ contains only finitely many elements.  We then use $P\times \cU$ as the underlying partition for the quantization.  We denote the resulting optimal value function by $V_\lambda:\cZ\to[0,\infty]$, the stabilizable subset by
\[
S_\lambda:=\{\bz \in \cZ: V_\lambda([\bz]) < \infty\}
\]
and the associated feedback by $u_\lambda:S_\lambda\to\cU$.   

\medskip

We will show that for a sufficiently large $\lambda < 1$ the closed loop system
\begin{equation}\label{eq:extended closed}
	\bz(\ell+1) = g(\bz(\ell),u_\lambda(\bz(\ell))),  \quad \ell=0,1,2,\ldots,
\end{equation}
is asymptotically stable on $S_\lambda$, i.e.\ that for $\bz(0)\in S_\lambda$ the associated trajectory enters $\cZ^*$ in finitely many steps. Furthermore, the number of control value changes along this trajectory will be minimal.  

To be more precise: For some initial state $\bz=\bz(0)\in S_\lambda$ let $(\bz(\ell))_\ell\in \cZ^\N$, $\bz(\ell)=(\bx(\ell),\bw(\ell)))$, be the corresponding trajectory of the closed loop system (\ref{eq:extended closed}), let
\[
	L(\bz,u_\lambda) = \min\{\ell \geq 0:\bz(\ell) \in \cZ^*\}
\]
be the number of time steps until the trajectory reaches the target set $\cZ^*$, 
\[
	E(\bz,u_\lambda) = \sum_{\ell=0}^{L(\bz,u_\lambda)} 1-\delta\bigl(u_\lambda(\bz(\ell))-\bw(\ell)\bigr)
\]
the number of control value changes along the corresponding trajectory as well as
\[
\tilde{J}(\bz,u_\lambda) = \sum_{\ell=0}^{L(\bz,u_\lambda)} \tilde{c}(\bx(\ell),u_\lambda(\bz(\ell)))
\]
the accumulated (original) costs.

\begin{theorem}
For all $\lambda \in [0,1)$, $S \times \cU\subset S_\lambda$ 
and $\bx(\ell) \rightarrow \cX^*$ as $\ell\to\infty$.

Further, there exists a $\lambda < 1$ such that for any feedback $u:S_\lambda\to\cU$ for the extended system and $\bz\in S_\lambda$ with $L(\bz,u) < \infty$  holds $E(\bz, u) \geq E(\bz, u_\lambda)$.
\end{theorem}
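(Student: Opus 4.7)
For the first claim, my approach is to lift the optimal feedback $u: S \to \cU$ of the original game to the extended system via $\tilde u(\bz) := u([\bx])$. Because the extended dynamics (\ref{eq:mincc system}) do not couple $\bw$ back into $\bx$, the $\bx$-trajectory produced by $\tilde u$ coincides with the original closed-loop trajectory (\ref{eq:closed loop}) and hence enters $\cX^*$ in some finite number $L$ of steps (finite uniformly in any adversarial choice strategy, since $P$ is finite and cycling would contradict $V([\bx(0)]) < \infty$). Bounding the $\tilde c$-part of the extended cost by $V([\bx(0)])$ and the indicator part by $1$ per step gives
\[
V_\lambda([\bz(0)]) \leq (1-\lambda)\, V([\bx(0)]) + \lambda L < \infty,
\]
whence $S \times \cU \subset S_\lambda$. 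The convergence $\bx(\ell) \to \cX^*$ under $u_\lambda$ then follows by the standard Bellman/Lyapunov argument used in \cite{GrJu07a,GrMu09a}: $V_\lambda$ is non-increasing along the closed-loop trajectory and decreases by a positive amount whenever outside $\cZ^*$, forcing finite-time arrival at $\cZ^*=\cX^*\times\cU$.

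For the second claim, the decisive observation is the additive splitting
\[
J_\lambda(\bz, u) = (1-\lambda)\, \tilde J(\bz, u) + \lambda\, E(\bz, u),
\]
valid whenever $L(\bz, u) < \infty$. Fix $\bz \in S_\lambda$, put $E^*(\bz) := \min_u E(\bz, u)$ over admissible feedbacks (those with $L < \infty$), and let $u^*$ attain this minimum. Optimality of $u_\lambda$ yields $J_\lambda(\bz, u_\lambda) \leq J_\lambda(\bz, u^*)$; substituting the splitting and rearranging gives
\[
\lambda\bigl(E(\bz, u_\lambda) - E^*(\bz)\bigr) \leq (1-\lambda)\bigl(\tilde J(\bz, u^*) - \tilde J(\bz, u_\lambda)\bigr) \leq (1-\lambda)\, M,
\]
where $M$ is a uniform upper bound on $\tilde J(\bz, u^*)$ over $\bz \in S_\lambda$; this bound is finite because $P$ and $\cU$ are finite, so only finitely many cell-trajectories can precede reaching $\cZ^*$ (otherwise the trajectory would cycle, contradicting $L < \infty$), and $\tilde c$ is continuous on the compact $\cX\times\cU$. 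Since $E(\bz, u_\lambda) - E^*(\bz)$ is a nonnegative integer, a strict inequality $E(\bz, u_\lambda) > E^*(\bz)$ forces $\lambda \leq M/(1 + M)$; hence for any $\lambda \in \bigl(M/(1 + M),\, 1\bigr)$ we conclude $E(\bz, u_\lambda) = E^*(\bz) \leq E(\bz, u)$ for every admissible $u$.

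The main obstacle I anticipate is reconciling the game-theoretic meaning of ``$u_\lambda$ is optimal'' — a statement about the upper value function $V_\lambda$ evaluated against adversarial choice functions — with the per-trajectory comparison in the conclusion of the theorem. The cleanest remedy is to interpret $\tilde J$, $E$, and $L$ throughout the proof as worst-case values over the choice-function strategies $\gamma$, so that the Bellman inequality $V_\lambda([\bz]) \leq V_\lambda^u([\bz])$ directly supplies the $J_\lambda$ comparison used above. The secondary technical point is establishing the uniform bound $M$, which rests on finiteness of the quantization together with the hypothesis $L(\bz,u^*) < \infty$, ruling out cycles at the cell level and thus capping the length and magnitude of $\tilde J(\bz, u^*)$.
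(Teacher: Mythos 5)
Your proof follows essentially the same route as the paper's, but it is tighter at exactly the two places where the paper is loose. For the first claim the paper uses your lifting argument verbatim: apply the original feedback $u(\bx)$ to the extended system, observe that the $\bx$-component reproduces the original closed-loop trajectory, and conclude $V_\lambda(\bz(0)) \leq (1-\lambda)V(\bx(0)) + \lambda L(\bz(0),u) < \infty$; asymptotic stability is delegated to the cited results, as you do. For the second claim the paper also works with the additive splitting $(1-\lambda)\tilde{J} + \lambda E$: it supposes a feedback $\bar{u}$ with $E(\bz,\bar{u}) \leq E(\bz,u_\lambda)-1$, derives $(1-\lambda)\tilde{J}(\bz,\bar{u}) \geq (1-\lambda)\tilde{J}(\bz,u_\lambda) + \lambda$, and ends with ``as the costs are finite, $\lambda \to 1$ leads to a contradiction.'' Read literally, that limit argument is not enough: the theorem asserts a \emph{single} $\lambda < 1$ valid for all $\bz \in S_\lambda$ and all admissible feedbacks, and as $\lambda \to 1$ both $u_\lambda$ and the competing feedback may change, so a uniform bound is needed. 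Your explicit constant $M$ and the resulting threshold $\lambda > M/(1+M)$ supply precisely this uniformity, and your justification of $M$ (finiteness of the quantized state set $P \times \cU$, so that a target-reaching closed-loop trajectory cannot revisit a cell) is sound --- though note that the no-revisit argument itself is only valid in the worst-case reading, since along an \emph{actual} trajectory a repeated cell does not imply a repeated state. This dovetails with the second weak point you correctly flag and which the paper passes over silently: optimality of $u_\lambda$ is a statement about the upper (worst-case) value of the quantized game, whereas $E$, $\tilde{J}$, $L$ are defined along the actual closed-loop trajectory; the comparison inequality at the heart of both proofs only follows if these quantities are interpreted in the worst-case sense, which is the remedy you propose. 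In short, your proposal is the paper's argument made rigorous, not a different argument.
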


\begin{proof}
By definition, the extended system \eqref{eq:mincc system} and the  cost function \eqref{eq:mincc costs} fulfill the assumptions in \cite{GrJu07a}, so asymptotic stability of the closed loop system (\ref{eq:extended closed}) directly follows for all $\bz \in S_\lambda$ by their proof.

In order to show that $S\times\cU\subset S_\lambda$ for all $\lambda\in [0,1)$, choose $\lambda\in [0,1)$ and some initial value $\bz(0) = (\bx(0), \bu(0))\in S\times\cU$ arbitrarily. Consider the feedback
\[
u(\bz) =u((\bx,\bu)):= u(\bx)
\]
for system \eqref{eq:mincc system}, where $u(\bx)$ denotes the feedback for (\ref{eq:event system}) which has been constructed in Section~\ref{sec:event}.  This leads to a trajectory $(\bx(\ell),\bu(\ell))_\ell$ of the extended system with $(\bx(\ell))_\ell$ being exactly the trajectory of the original system (\ref{eq:event system}).
Since $\bx(0) \in S$, $V(\bx(0))$ is finite and the accumulated cost $\tilde J(\bz(0),u)$ for this trajectory does not exceed $(1-\lambda)V(\bx(0)) + \lambda L(\bz(0),u)$ which is finite.
According to the optimality of $V_\lambda$,
\[
V_\lambda(\bz(0)) \leq (1-\lambda)V(\bx(0)) + \lambda L(\bz(0),u) <\infty
\]
follows, i.e. $\bz(0) \in S_\lambda$.

To show the optimality of $u_\lambda$ with respect to the number of control value changes, assume there exists a feedback $\bar{u}:S_\lambda\to\cU$ with $E(\bz,\bar{u}) \leq E(\bz,u_\lambda)-1$ for some $\bz \in S_\lambda$.
Since $u_\lambda$ is optimal, the following inequality holds:

\begin{align}
	       (1-\lambda)\tilde{J}&(\bz,\bar{u})     + \lambda E(\bz,\bar{u}) \\
	 &\geq (1-\lambda)\tilde{J}(\bz,u_\lambda) + \lambda E(\bz,u_\lambda) \\
	 &\geq (1-\lambda)\tilde{J}(\bz,u_\lambda) + \lambda E(\bz,\bar{u}) + \lambda 
\end{align}
and thus
\begin{equation*}
(1-\lambda)\tilde{J}(\bz,\bar{u}) \geq (1-\lambda)\tilde{J}(\bz,u_\lambda) + \lambda.
\end{equation*}
As the costs denoted by $\tilde{J}$ are finite, $\lambda \rightarrow 1$ leads to a contradiction.
\end{proof}


\section{Numerical experiments}
\label{sec:NumExpMinCC}

\subsection{Nonlinear Inverted Pendulum}

For our numerical experiments we first consider an inverted pendulum on a cart, cf.\ \citep{JaYuHa01a}. The motion of the pendulum is given by the continuous time control system
\[
        \left(\frac{4}{3} - m_r \cos^2 \varphi\right) \ddot\varphi +
        \frac{m_r}{2} \dot\varphi^2 \sin 2 \varphi -
        \frac{g}{\ell}\sin \varphi  
  =  - u\;\frac{m_r}{m 
        \ell} \cos \varphi, 
\]
where $(\varphi,\dot\varphi)\in [0,2\pi]\times\R$ denotes the state of the pendulum and $u\in\cU\subset\R$ is the control input.  We have used the parameters $m=2$ for the pendulum mass, 
$m_r = m / (m + M)$ for the mass ratio with cart mass $M=8$, 
$\ell = 0.5$ as the length of the pendulum and $g = 9.8$ for the
gravitational constant. As the instantaneous cost function, we employ  
\begin{equation}
\label{eq:costcont}
        q(\varphi,\dot\varphi,u,t) = \frac{0.01}{2} u^2 + t,
\end{equation}
where $t\in\R$ is the system's time. Denoting the system's evolution operator for constant control functions $u(t)\equiv\bu\in\cU$ by
$\bPhi^t(\bx,\bu)$, $\bx=(\varphi,\dot\varphi)$, we consider the discrete time system
$f(\bx,\bu) = \bPhi^T(\bx,\bu)$ for $T=0.01$, i.e., the sampled continuous time system
with sampling rate $T=0.01$. The map $\bPhi^T$ is approximated via the classical Runge-Kutta scheme
of order $4$ with 5 equidistant steps. The discrete time cost function is obtained by
numerically integrating the continuous time instantaneous cost
according to $c(\bx,\bu) = \int_0^T q(\bPhi^t(\bx,\bu),\bu,t)dt$.
We choose the state space $\cX = [-10,10] \times [-8,8]$ and a partition of $2^7 \times 2^7$ equally sized partition elements on $\cX$.
The control space is chosen as $\cU = [-64,64]$, discretized by 17 equidistant samples and
the target region is set to $[-\frac{5}{8},\frac{5}{8}] \times [-\frac{1}{2},\frac{1}{2}]$ (i.e.\ $8 \times 8$ partition elements around $[0,0]$).

Let $\bs(\bx)\in\R^2$ and $\bt(\bx)\in\R^2$ denote the center and the radius of the rectangular partition element
containing $\bx$, respectively, then we define an \emph{event set} via
\begin{equation}
	\beta(\bx) = \{\by=(y_1,y_2)^T \in \cX ~:~ |y_i - s_i(\bx)| \leq e_r t_i(\bx), i=1,2 \}
\end{equation}
with \emph{event radius} $e_r = 9$ and the event function
\begin{equation}
	r(\bx,\bu) = 	
 	\begin{cases}
 		\min \{t \in 0.01 \mathbb{N}  ~:~ \bPhi^t(\bx,\bu) \notin \beta(\bx) \} ,\\
 		\qquad ~\text{if not empty} \\
 		0 , ~\text{else.}
	\end{cases}
\end{equation}
\begin{figure}[htb]
\begin{center}
	\includegraphics[width=0.8\columnwidth]{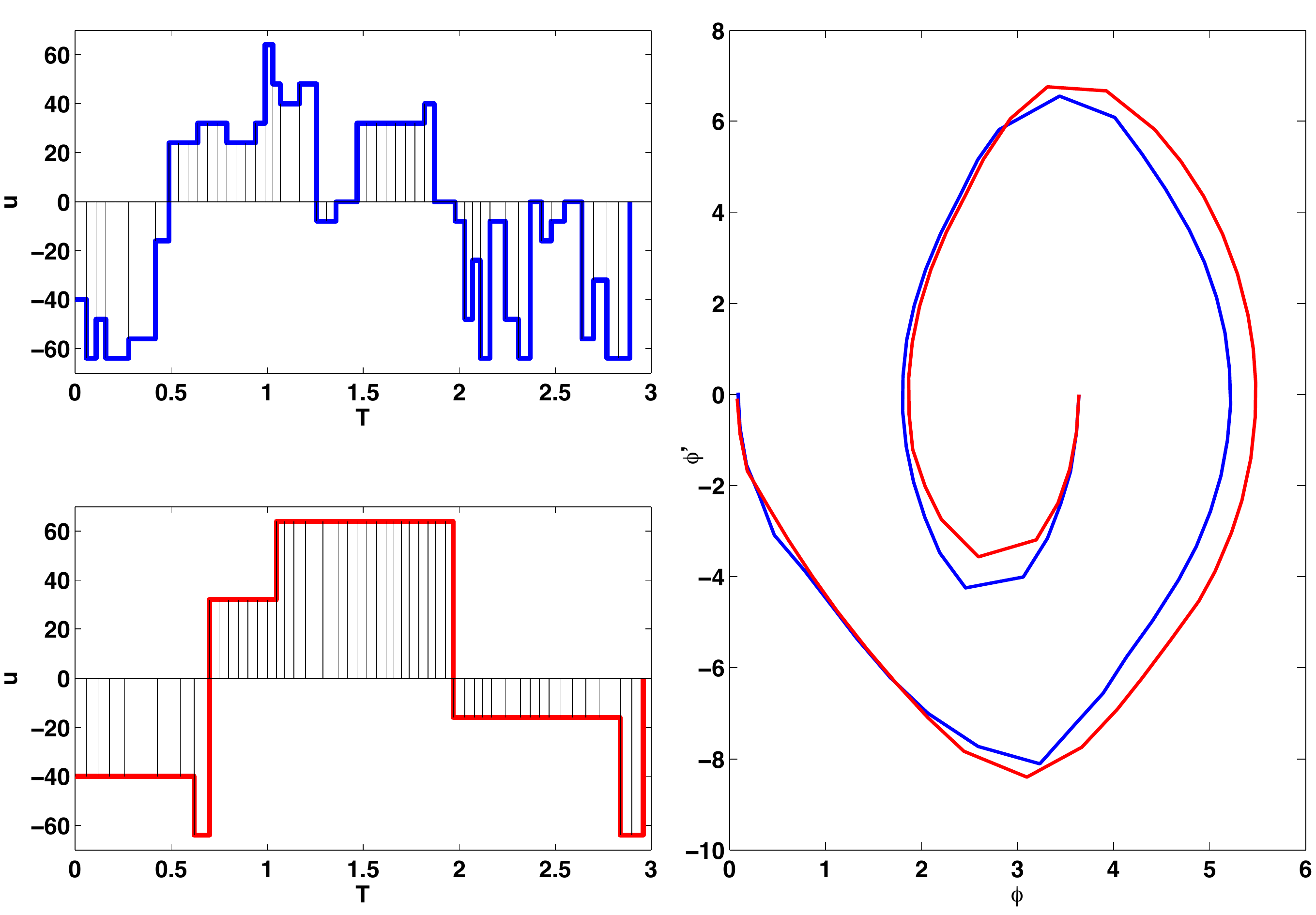}
	\caption{Inverted pendulum: Control sequence over time (left) and the associated trajectories in state space for the initial state $(\pi+0.5,0)^T$.  Blue (dark): generated by the ordinary feedback, red (light): generated by the lazy feedback.}
	\label{fig:MinCC}
\end{center}
\end{figure}
For this quantized event system, we employ the construction in the two previous sections, i.e.\ we compute the (``ordinary'') feedback as described in Section~\ref{sec:event} as well as the lazy feedback  from Section~\ref{sec:mincc} with $\lambda=0.99$.

Figure \ref{fig:MinCC} shows two trajectories of the closed loop system (\ref{eq:closed loop}) starting at the initial state $(\pi+0.5,0)^T\in\cX$. On the right-hand side we show these two trajectories in state space while on the left-hand side we plot the associated control sequences over time. The blue (dark) trajectory results from the ordinary feedback, while the red (light) trajectory is generated with the lazy feedback. 

The gray ticks on the left indicate all time instances where an event occurs.  While the ordinary feedback leads to a change of the control value at almost every event and generates a total number of $32$ control value changes, the lazy feedback stabilizes the system using only $5$ control changes. Note that the time until the target set is reached (which is essentially minimized here) remains almost the same.

\subsection{Batch Reactor}

In this numercial experiment, the aim is to control a thermofluid process in a batch reactor (cf.\ Figure~\ref{batchprocess}) as described in \cite{GrJeJuLeLuMuPo09a}.
The main part of the process consists in the cylindrical batch reactor~$\mrm{TB}$
which has a continuously adjustable inflow via valve~$\mrm
V_1$ of water from the spherical tank~$\mrm T3$ above.
In addition, a permanent outflow only depending on the fluid level
in~$\mrm{TB}$ is present. 
\begin{figure}[ht]
    \centering
    \includegraphics[width=0.7\columnwidth,height=0.5\columnwidth]{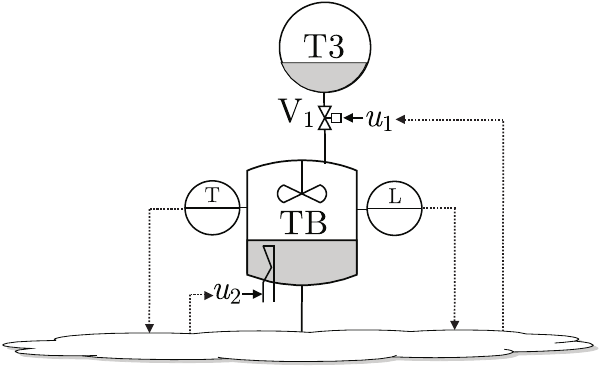}
    \caption{Thermofluid process}
    \label{batchprocess}
\end{figure}
Heating rods can increase the temperature of the fluid
in~$\mrm{TB}$ while cooling can only be achieved by the inflow of cool water from~$\mrm T3$.
The two state variables that can be continously measured are
the fluid level $l_\mrm{TB}$ and the temperature ~$\vartheta_\mrm{TB}$  in~$\mrm{TB}$, so the state becomes $\bm x = (x_1,x_2) = (l_\mrm{TB}, \vartheta_\mrm{TB})$.
The valve angle~$u_1\in [0,1]$ of valve~$\mrm V_1$ and the power~$u_2\in \{0,\ldots,6\}$ of the heating rods are considered
as input $\bm u=(u_1,\;u_2)$.
To model the nonlinear dynamics of the process we use the following differential equation with parameters from
Table~\ref{tab: 1} (cf. \cite{GrJeJuLeLuMuPo09a}):

\begin{equation}\begin{split}
\dot{x}_1 &= \frac{1}{A_\mrm h}
  \left(q_\mrm{T3}(u_1) - K_\mrm{A}\sqrt{2g x_1} \right) \\
\dot{x}_2 &= \frac{1}{V(x_1)} \left( q_\mrm{T3}(u_1)(\vartheta_\mrm{T3}-x_2)
  +\frac{P_\mrm{el}k_\mrm{h}u_2}{\varrho\;c_\mrm{p}}\right),
\end{split}\end{equation}
where
\begin{equation}\begin{split}
q_\mrm{T3}(u) &= \left\{%
\begin{array}{ll}
    7\!\cdot\!10^{-6}(11.1 u^2+13.1 u+0.2) & \hbox{ for } u>0.2,
     \\
    0 & \hbox{ else}\\
\end{array}%
\right.
\\
V(x) &= 0.07 x-1.9\!\cdot\!10^{-3} \; \hbox{for } x>0.26.
\end{split}\end{equation}
The unit of the flow $q_\mrm{T3}$ is m$^3$/s and the unit of the volume
$V$ of the fluid in TB is m$^3$.

\begin{table}[!h]%
   \centering%
   \caption{Parameters and constants}%
   \label{tab: 1}
   \begin{tabular}{lll}
      \hline\noalign{\smallskip}
      Parameter & Value & Meaning \\
      \noalign{\smallskip}\hline\noalign{\smallskip}
      $P_\mrm{el}$ & 3000 W & Electrical power \\
      $k_\mrm h$ & $0.84\; \mrm{J} / (\mrm{W} \, \mrm{s})$ & Heat transfer coefficient \\
      $c_\mrm p$ & $4180\; \mrm{J} / (\mrm{kg} \, \mrm{K})$ & Heat capacity of water \\
      $g$ & $9.81\; \mrm{m} / \mrm{s}^2$ & Gravitation constant \\
      $\varrho$ & $998\; \mrm{kg} / \mrm{m}^3$ & Density of water \\
      $\vartheta_\mrm{T3}$ & $293.15$ K  & Temperature of inflow \\
      $K_\mrm{A}$ & $1.59\!\cdot 10^{-5}$ $\mrm m^3 / \mrm m$ & Outflow parameter \\
      $A_\mrm{h}$ & $0.07 \mrm m^2$ & Cross sectional area \\
      \noalign{\smallskip} \hline
   \end{tabular}
\end{table}

The goal is to steer the system into a neighbourhood of 
the operating point $(\bar{l}_\mrm{TB}, \bar{\vartheta}_\mrm{TB})  =  (0.349 \, \mrm{m}, 310.56 \, \mrm K)$ with only a minimum number of control changes, therefore we use a weighted sum of time and a quadratic function in $\bx$ and $\bu$ with a small gain as cost function.
According to the physical limitations of the reactor, we set the state space to $\cX = [0.26,0.45]\mrm m \times [293.15,323.15] \mrm K$ and use a discretization of $2^6 \times 2^6$ partition elements. The continuous input $u_1\in [0,1]$ is discretized via $12$ equidistant samples. For the time integration of the ordinary differential equation, we use the classical Runge-Kutta scheme of order $4$ with $5$ equidistant steps and a time step of $1 \mrm{s}$. An event is generated whenever the state leaves a partition element, i.e.\ we employ an event radius of $e_r=1$ here.


We consider the initial state $[0.275 \mrm{m},295 \mrm K]$ and -- like in the first example -- compare two trajectories associated, respectively, to the ordinary feedback described in Section~\ref{sec:event} and the lazy one as described in Section~\ref{sec:mincc} with $\lambda=0.9$.  In Figure~\ref{fig:MinCCBatch} (left) we compare the generated control sequences.  Using the lazy feedback, the number of control value changes is reduced dramatically from $13$ to only $2$.  Note that, again, the time required for the system to reach the target set remains almost the same, cf.\ the right part of Figure~\ref{fig:MinCCBatch}.

\begin{figure}[htb]
\begin{center}
	\includegraphics[width=0.98\columnwidth]{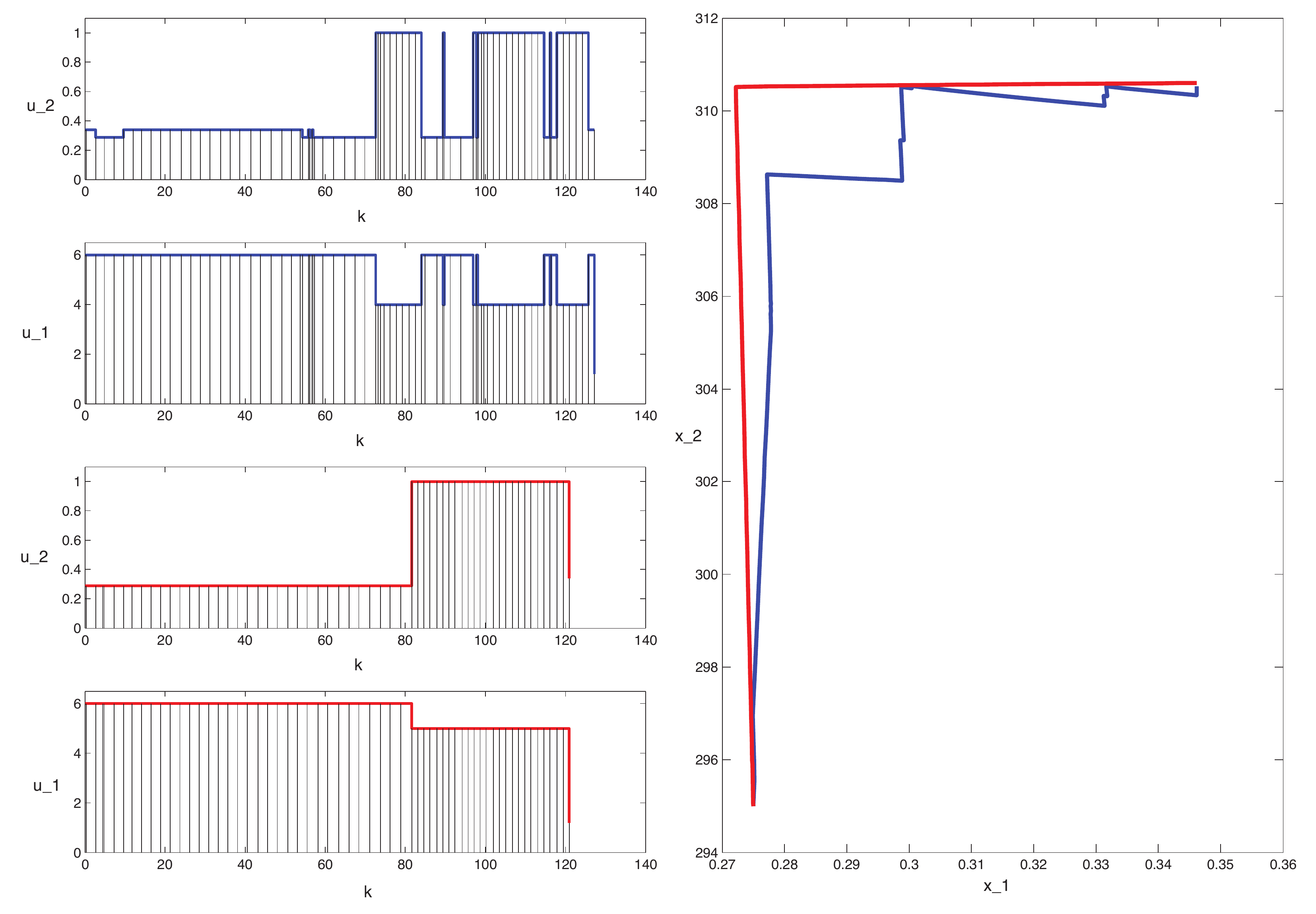}
	\caption{Left: Output of the controller over time for the batch process with initial state $[0.275 \mrm{m},295 \mrm K]$. Upper figure: using the standard cost function. Lower figure: lazy feedback, i.e.\ minimal number of control value changes ($\lambda=0.9$). Right: Associated feedback trajectories in state space. Blue (dark): using the standard cost function. Red (light): lazy feedback, i.e.\ minimal number of control value changes ($\lambda=0.9$).}
	\label{fig:MinCCBatch}
\end{center}
\end{figure}

\section{A heuristic approach for reducing the data transmission frequency}
\label{sec:heuristic}

One drawback of the lazy feedback construction proposed in Section~\ref{sec:mincc} is the need to extend the state space to $\cX \times \cU$. This leads to a notable increase in both memory requirements and computing time. In this section we will illustrate that this expansion is really needed in order to obtain the \emph{minimum} number of control changes.  Nevertheless, there is a heuristic way of \emph{reducing} the number of control changes without these drawbacks.  In order to derive this heuristic, we first have a closer look at the graph construction which is used in order to compute the value function and the feedback, cf.\ \cite{JuOs04a,GrJu08a}.

For each partition element $\cP\in P$, the value  $V(\cP)$ is given by the length of the shortest path from $\cP$ to the set of partition elements which constitute $\cX^*$ in the following hypergraph $G=(P,\mathcal{E})$:  The edges of $G$ are given by
\[
\mathcal{E}=\left\{(\cP,F(\cP,\bu,\Gamma)) \mid \cP\in P, \bu\in\cU\right\}
\]
weighted by
\[
w(\cP,N)=\inf\{C(\cP,\bu): \bu\in\cU, F(\cP,\bu,\Gamma)=N\},
\]
where $F(\cP,\bu,\Gamma):=\{F(\cP,\bu,\gamma)\in P\mid \gamma\in\Gamma\}$.
As such, it can be computed by an efficient Dijkstra-type algorithm \citep{GrJu07a,Lo07a}, cf.\ Algorithm~1.

\begin{algorithm}[H]
\label{alg:Dijkstrahypergraph}
\caption{MinMax-Dijkstra}
\emph{Input:~~}
\begin{tabular}[t]{l}
 	hypergraph $(P,\cE)$\\
 	weights $w:\cE\to (0,\infty)$\\
 	set of target nodes $O\subset P$
\end{tabular}\\
\emph{Output:}
\begin{tabular}[t]{l} 	
	value function $V:P\to [0,\infty]$\\
 	control input $u:P\to\cU$
\end{tabular}\\
\begin{algorithmic}[1]
\setcounter{ALC@line}{0}
	\FORALL{$\cP \in P \backslash O$}
		\STATE $V(\cP) := \infty$
	\ENDFOR
\end{algorithmic}
\begin{algorithmic}[1]
\setcounter{ALC@line}{2}
	\FORALL{$\cP \in O$}
		\STATE $V(\cP) := 0$
	\ENDFOR
\end{algorithmic}
\begin{algorithmic}[1]
\setcounter{ALC@line}{4}
	\STATE $Q := P$\
\end{algorithmic}

\begin{algorithmic}[1]
\setcounter{ALC@line}{5}
\WHILE{$Q \neq \emptyset$}
	\STATE $\cP := \argmin_{\cP' \in Q} V(\cP')$
	\STATE $Q := Q \backslash \{\cP\}$
	\FORALL{$(\cQ,N) \in \cE ~ \text{with} ~ \cP \in N$}
		\IF{$N \subset P \backslash Q$}
			\IF{$V(\cQ) > w(\cQ,N) + V(\cP)$}
					\STATE $V(\cQ) := w(\cQ,N) + V(\cP)$
					\STATE $u(\cQ) := u(\cQ,N)$
			\ENDIF
		\ENDIF
	\ENDFOR
\ENDWHILE
\end{algorithmic}
\end{algorithm}



Here, $u(\cQ,N)=\argmin_{\bu\in\cU} \{C(P,\bu)\mid F(\cQ,\bu,\Gamma)=N\}$.
One easily shows that when a node $\cP$ is removed from $Q$ in line 6, $V(\cP)$ and $u(\cP)$ are fixed until termination (cf.\ for example \citep{AhRa90}).
It follows that when a hyperedge $(\cQ,N)$ is being processed in lines 9 and 10, the value $V(\cP)$ is fixed for all target nodes $\cP \in N$ of the hyperedge because $N \subset P \backslash Q$~\emph{(line~$7,8$)}.
This information can be used for the choice of a proper control $u(\cQ)$.

To be more precise, let $u(\cQ,N)$ denote the control applied for hyperedge $(\cQ,N)$, $u_0(\cP)$ an arbitrary control for the target nodes $\cP \in O$ and $\lambda \in [0, 1)$ a parameter. Then in order to reduce the number of control changes we change lines~$3-4$ and lines~$11-13$ in Algorithm~1 as follows:

\begin{algorithm}
\label{alg:DijkstrahypergraphHeuristic}
\begin{algorithmic}[1]
\setcounter{ALC@line}{2}
\FORALL{$\cP \in O$}
	\STATE $V(\cP) := 0 ~\text{and}~u(\cP) := u_0(\cP)$
\ENDFOR
\end{algorithmic}
~
\begin{algorithmic}[1]
\setcounter{ALC@line}{10}
\IF{$V(\cQ) > (1-\lambda)w(\cQ,N) + \lambda \sigma(u(\cQ,N), N) + V(\cP)$}
	\STATE $V(\cQ) := (1-\lambda)w(\cQ,N) + \lambda \sigma(u(\cQ,N), N) + V(\cP)$
	\STATE $u(\cQ) := u(\cQ,N)$
\ENDIF
\end{algorithmic}
\end{algorithm}


Here, $\sigma:\cU \times P^{|N|} \rightarrow [0,\infty)$ is a function dependent on the control of the hyperedge and the controls of the possible subsequent states. In order to reduce the number of control changes along trajectories, $\sigma(u, N)$ has to be large when $u \neq u(\cP)$ for many $\cP \in N$ and should be small otherwise.
For example, one can set
\[
\sigma(u,N) = 1-\delta\left(u\left(\argmax_{\cP \in N}V(\cP)\right)-u(\cQ,N)\right)
\]
with $\delta$ defined in \eqref{eq:mincc varphi}.  In our numerical tests, however, choosing
\[
\sigma(u,N) = \frac{1}{|N|} \sum_{\cP \in N} 1-\delta\Bigl(u(\cP)-u(\cQ,N)\Bigr)
\]
seemed to lead to better results.

\subsection{A counterexample}

In order to show that the heuristic approach in general does not lead to a feedback which produces the minimal number of control value changes,
we consider the following counterexample, cf.\ Figure \ref{fig:CounterExample}.

\begin{figure}[htb]
\begin{center}
	\includegraphics[width=0.6\columnwidth]{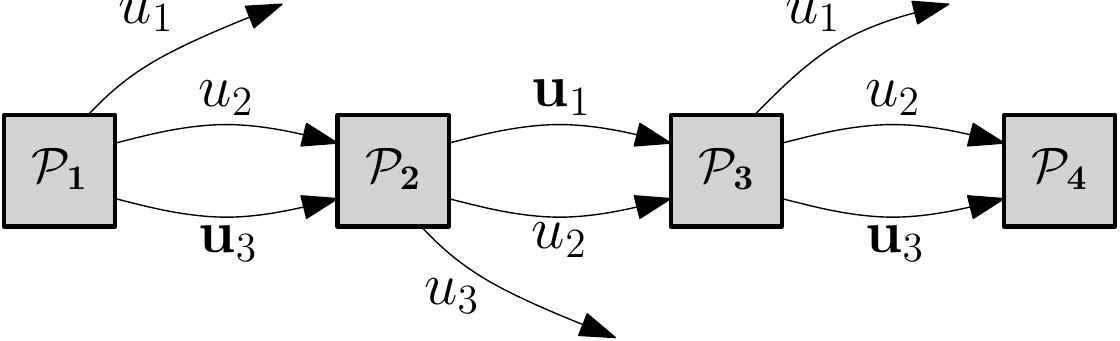}
	\caption{Counterexample to illustrate that the heuristic approach in general does not lead to a feedback which minimizes
	the number of control changes}
	\label{fig:CounterExample}
\end{center}
\end{figure}

We start with node $\cP_4$ on the right and assume that control input $u_1$ is optimal for node $\cP_4$. Let $\cP_3$ be the neighbour of $\cP_4$ being processed next in Algorithm~1. As control input $u_1$ does not lead to a transition to $\cP_4$ we only have the choice between $u_2$ and $u_3$. Since both of them lead to a control change, we may chose as well $u_3$.  This procedure can be done recursively up to state $\cP_1$ on the left with $u_1$ and $u_2$ or $u_1$ and $u_3$, respectively. We end up with a trajectory from $\cP_1$ to $\cP_4$ switching control $3$ times, which is not the optimum compared to having constant control $u_2$ and switching control only once at the end.

The reason for this effect is that for each node $\cP$ we only have computed the optimal value $V(\cP)$ and control input $u(\cP)$ with respect to a trajectory that starts at $\cP$. But we do not know about optimality when $\cP$ is not the initial state of a trajectory. A list of the values $V(\cP)$ for all pairs $(\cP,\bu) \in P \times \cU$ would be sufficient to remedy this -- but this then leads directly back to the lazy feedback approach of section \ref{sec:mincc}.

\subsection{Numerical experiments}
\label{subsec:NumExpRedCC}

Nevertheless, in numerical experiments we often obtain reasonable results even with the heuristic approach. The number of control changes often may be reduced without enlarging the state space and so without any noticeable effect on the memory consumption (cf.\ Figure~\ref{fig:RedCC}, where the number of control value changes was reduced from 33 to 18).
\begin{figure}[htb]
\begin{center}
	\includegraphics[width=0.98\columnwidth]{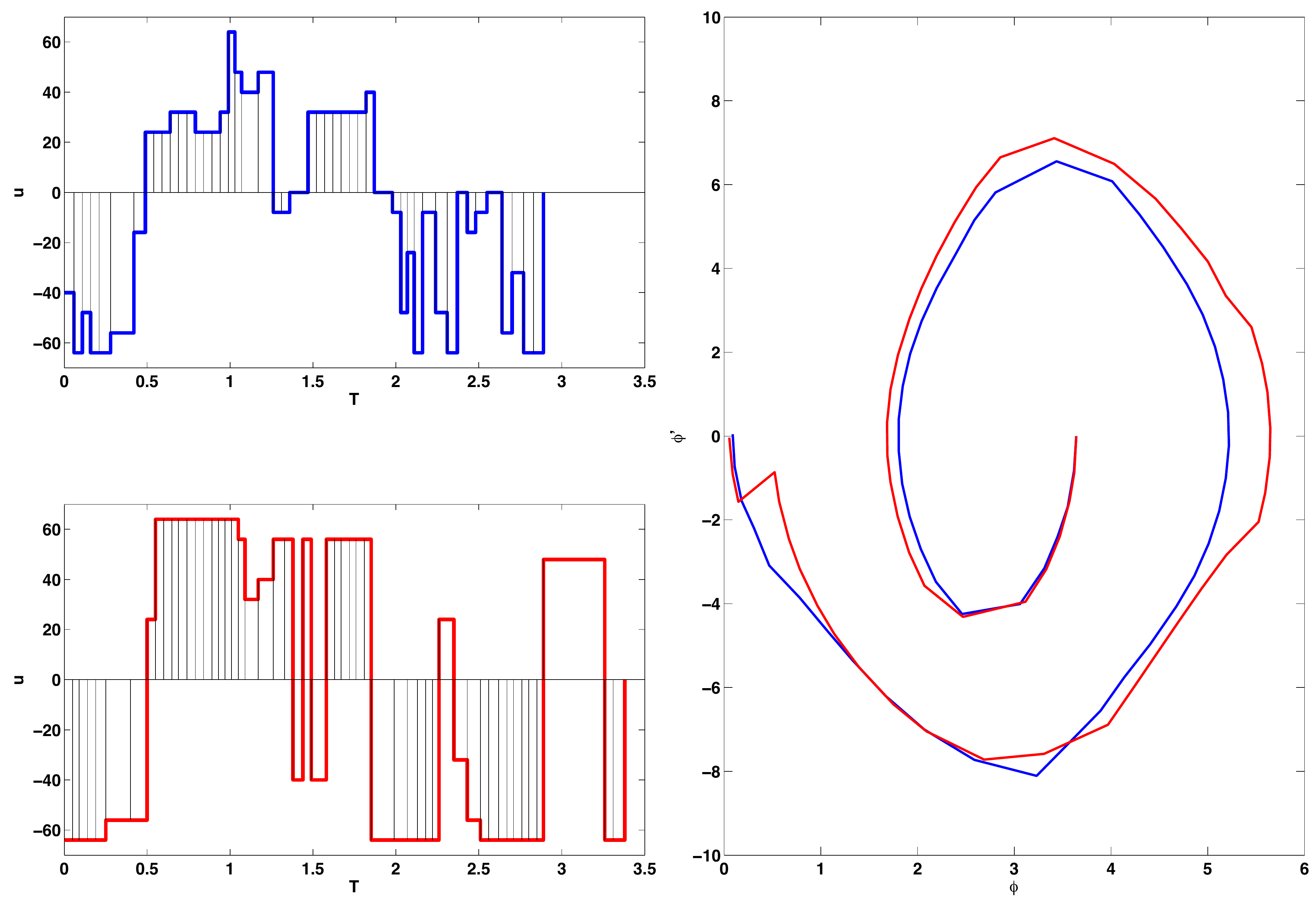}
	\caption{Inverted pendulum: Control sequences (left) and state space trajectory (right)
	 for the ordinary feedback (blue/dark) in comparison to the heuristic approach
	for reducing the number of control changes (red/light).  Parameters are the same as in Section~\ref{sec:NumExpMinCC}.}
	\label{fig:RedCC}
\end{center}
\end{figure}

On the other hand, the heuristic approach does not seem to be prone to failure, as shown in repeating the numerical experiment with the batch reactor.  Here (cf.\ Figure~\ref{fig:RedCCBatch}), the number of control value changes is actually increased.  A closer inspection seems to reveal a phenomenon similar to the counterexample described above.
\begin{figure}[htb]
\begin{center}
	\includegraphics[width=0.7\columnwidth]{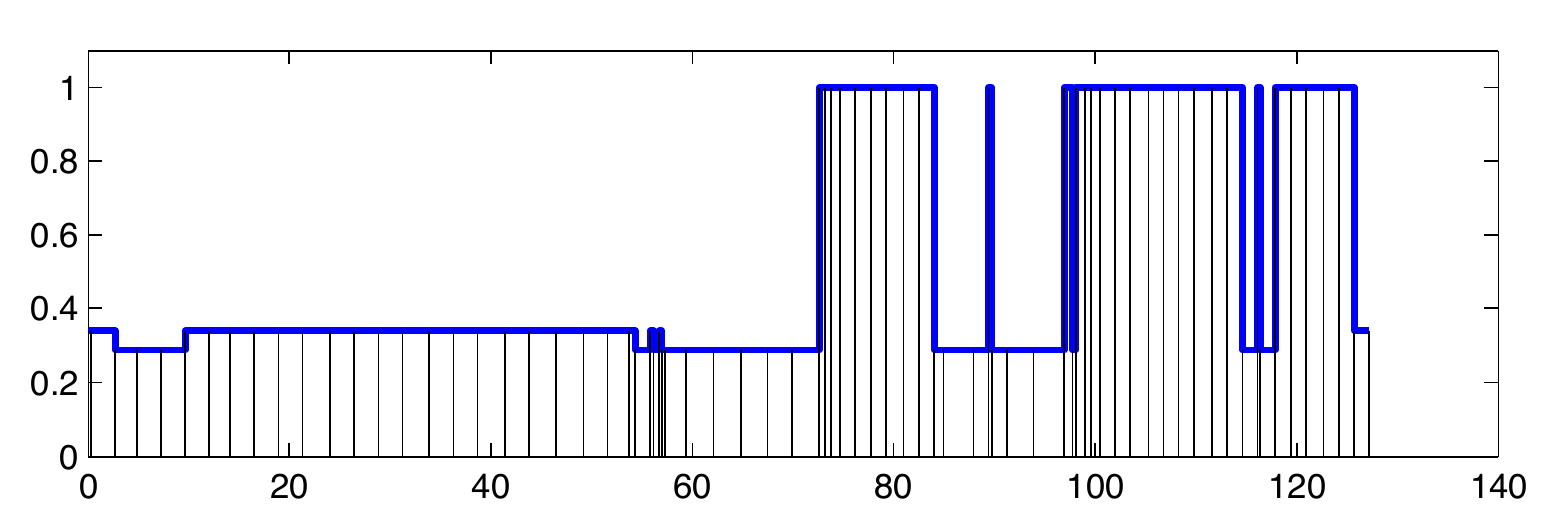}
	\includegraphics[width=0.7\columnwidth]{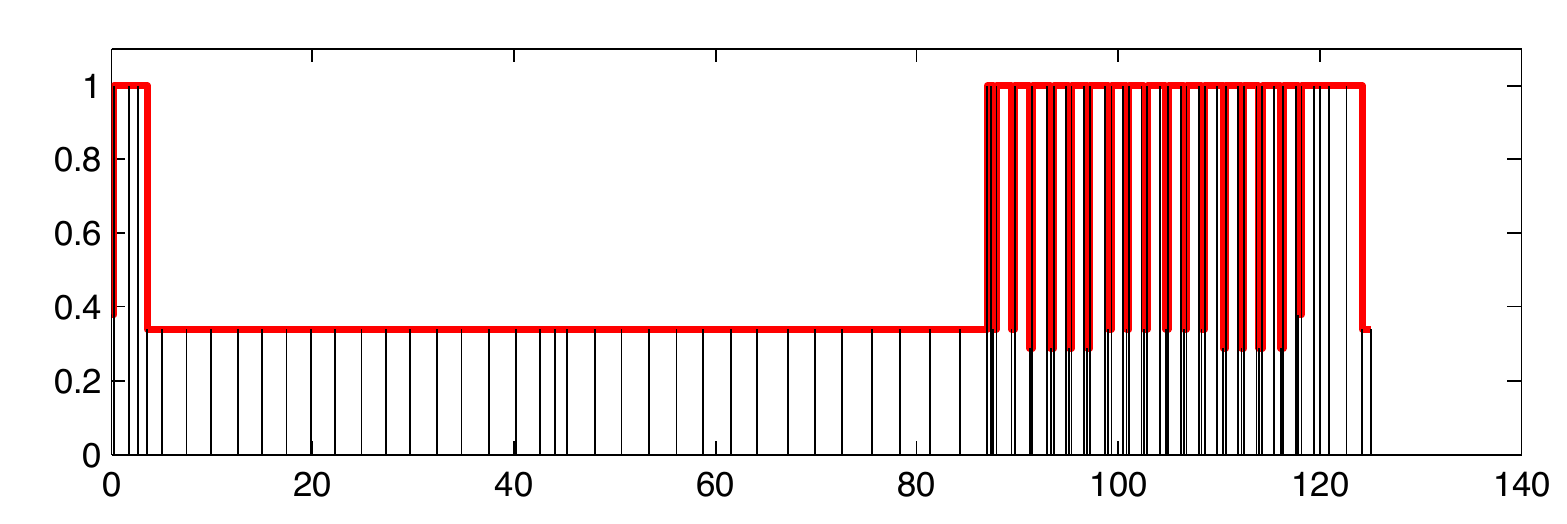}
	\caption{Batch reactor: Control sequences (left) and state space trajectory (right)
	 for the ordinary feedback (blue/dark) in comparison to the heuristic approach
	for reducing the number of control changes (red/light).  Parameters are the same as in Section~\ref{sec:NumExpMinCC}.}
	\label{fig:RedCCBatch}
\end{center}
\end{figure}

\section{Conclusion}

Based on an extended model of a nonlinear quantized event system we introduced the concept of a lazy feedback which stabilizes the system with the minimal number of control value changes.  We show that the lazy feedback is indeed optimal in this sense among all feedbacks which stabilize the same set of initial conditons.
In addition, we illustrated a heuristic method which sometimes can be used for reducing the number of control changes without any further computational effort compared to the lazy feedback construction. 

\bibliographystyle{alpha}
\bibliography{JeJu_lazy}

\end{document}